\definecolor{veryperi}{RGB}{102,103,171}
\pgfplotsset{compat=1.18}
\newcommand{\N}{\mathbb{N}}
\newcommand{\R}{\mathbb{R}}
\newcommand{\C}{\mathbb{C}}
\newcommand{\calA}{\mathcal{A}}
\newcommand{\bbR}{\mathbb{R}}
\newtheorem{thm}{Theorem}[section]
\newtheorem{prop}[thm]{Proposition}
\theoremstyle{definition}
\newtheorem{dfn}[thm]{Definition}
\newtheorem{ex}[thm]{Example}
\theoremstyle{remark}
\newtheorem{rmk}[thm]{Remark}
\numberwithin{equation}{section}
\begin{document}

\title{Unique wavelet sign retrieval from samples without bandlimiting}

\author{Rima Alaifari}
\address{Seminar for Applied Mathematics, ETH Z{\"u}rich, R{\"a}mistrasse 101, 8092 Z{\"u}rich, Switzerland}
\email{rima.alaifari@sam.math.ethz.ch}
\thanks{The authors were supported in part by SNSF Grant
200021 184698.}
%
\author{Francesca Bartolucci}
\address{Delft Institute of Applied Mathematics, 
TU Delft, Mekelweg 4, 2628 CD Delft, 
The Netherlands}
\email{f.bartolucci@tudelft.nl}
%
\author{Matthias Wellershoff}
\address{Department of Mathematics, University of Maryland, 4176 Campus Drive, College Park, MD 20742, USA}
\email{wellersm@umd.edu}

\subjclass[2020]{Primary 42C40, 42C15, 30H20, 94A12, 94A20; Secondary 45Q05}

\date{\today}


\keywords{Phase retrieval, Wavelet transform, Cauchy wavelet, Poisson wavelet, weighted Bergman space, Wavelet frame, Sampling theorem}

\begin{abstract}
We study the problem of recovering a signal from magnitudes of its wavelet frame coefficients when the analyzing wavelet is real-valued. We show that every real-valued signal can be uniquely recovered, up to global sign, from its multi-wavelet frame coefficients  
\[
\{\lvert \mathcal{W}_{\phi_i} f(\alpha^{m}\beta n,\alpha^{m}) \rvert: i\in\{1,2,3\}, m,n\in\mathbb{Z}\}
\]
for every $\alpha>1,\beta>0$ with $\beta\ln(\alpha)\leq 4\pi/(1+4p)$, $p>0$, when the three wavelets $\phi_i$ are suitable linear combinations of the Poisson wavelet $P_p$ of order $p$ and its Hilbert transform $\mathscr{H}P_p$. For complex-valued signals we find that this is not possible for any choice of the parameters $\alpha>1,\beta>0$ and for any window.  In contrast to the existing literature on wavelet sign retrieval, our uniqueness results do not require any bandlimiting constraints or other a priori knowledge on the real-valued signals to guarantee their unique recovery from the absolute values of their wavelet coefficients.
\end{abstract}

\maketitle

\section{Introduction}
{\it Wavelet phase retrieval} refers to the inverse problem of reconstructing a square-integrable function $f$ from its {\it scalogram}; that is, from the absolute value of its {\it wavelet transform}:
\[
    \mathcal{W}_{\phi}f(b,a) := a^{-\frac{1}{2}} \int_{\R} f(x) \overline{\phi\left(\frac{x-b}{a}\right)} \,\mathrm{d} x, \qquad b \in \R,~a \in \R_+. 
\]
The wavelet transform emerged from research activities aimed at developing new analysis and processing tools to enhance signal theory and has proved to be extremely efficient in various applications such as denoising and compression. We refer to \cite{daub92,hol95,mallat98} for a thorough overview of wavelet analysis. However, there is still limited knowledge of the problem of reconstructing a function from the absolute value of its wavelet transform. This inverse problem arises in audio analysis and processing and has recently received an increasing amount of attention \cite{alaifari2017reconstructing,holighaus2019characterization,jaming2014uniqueness,mallat2015phase,waldspurger17}.

More precisely, wavelet phase retrieval aims at determining for which analyzing wavelets $\phi$, and which choices of sets $\Lambda \subseteq \R \times \R_+$ and subspaces $\mathcal{M} \subseteq L^2(\R)$ the forward operator 
\begin{equation}\label{eq:phaseretrievaloperator}
    \mathcal{A}_\phi : \mathcal{M} /\!\sim \, \to\, [0,+\infty)^\Lambda,\qquad \mathcal{A}_\phi (f)(b,a) := \lvert \mathcal{W}_\phi f (b,a) \rvert, \quad (b,a) \in \Lambda,
\end{equation}
is injective, where $f\sim g$ if and only if $f=\text{e}^{i\alpha}g$ for some $\alpha\in\R$. In the following, we distinguish between \emph{continuous} wavelet phase retrieval, meaning the recovery of $f$ (up to a global phase) from $\calA_\phi(f)$ when $\Lambda$ has the cardinality of the continuum, and \emph{sampled} wavelet phase retrieval, i.e.~the recovery of $f$ (up
to a global phase) from $\calA_\phi$ when $\Lambda$ is a discrete subset of $\R \times \R_+$.

We emphasise that in this paper we treat \emph{real-valued} analyzing wavelets and point out that whenever $\phi$ is real-valued the map 
\[
L^2(\R)/\!\sim\, \ni f \mapsto (|\mathcal{W}_\phi f(b,a)|)_{(b,a)\in\R\times\R_+}
\]
is not injective. To see that, we consider a function $f\in L^2(\R)$ with $\operatorname{Re}f\not\equiv 0$ and $\operatorname{Im}f\not\equiv 0$. Then, the functions $f$ and $g=\operatorname{Re}f-i\operatorname{Im}f$ are not equal up to a global constant phase, i.e.~$f \not\sim g$, but satisfy 
\[
|\mathcal{W}_\phi f(b,a)|=|\mathcal{W}_\phi g(b,a)|,\quad (b,a)\in\R\times\R_+.
\]
Therefore, we cannot hope to have a uniqueness result in $L^2(\R)/\!\sim$ when the analyzing wavelet is real-valued and the restriction to the space of real-valued square-integrable functions $L^2(\R,\R)/\!\sim$ is optimal. In this latter case, $f\sim g$ if and only if $f=\pm g$, and we refer to this problem as {\it wavelet sign retrieval}.

\subsection{Prior work: sampled wavelet sign retrieval with bandlimiting}\label{sec:priorwork}
The existing literature on sampled wavelet sign retrieval only includes uniqueness results which require either the analyzing wavelets or the signals to be bandlimited. In \cite{alaifari2017reconstructing} the authors show that every real-valued function $f\in L^2(\R)$ that has exponential decay at infinity is uniquely determined (up to a global sign) by its wavelet coefficients 
\[
\{\lvert \mathcal{W}_\phi f(2^{-m}\beta n,2^{-m}) \rvert: m\in\mathbb{N}, n\in\mathbb{Z}\}
\]
if $\phi$ is a real-valued bandlimited wavelet and $\beta>0$ is a sampling parameter explicitly determined by the bandwidth of $\phi$. Examples of real-valued bandlimited wavelets are the Meyer wavelet and the Shannon wavelet.

Then, in our recent paper \cite{baralawel2021}, we prove that, for every choice of the sampling parameters $\alpha > 1$, $\beta > 0$, and for every wavelet $\phi \in L^2(\R)$ with finitely many vanishing moments\footnote{We say that a wavelet has a \emph{finite number of vanishing moments} if there exists an $\ell\in\N$ such that
\begin{equation*}
    \lim_{\xi\to0}\xi^{-\ell}\widehat{\phi}(\xi)\in \C\setminus\{0\}.
\end{equation*}}, all real-valued bandlimited functions $f \in L^2(\R)$ are uniquely determined (up to a global sign) by the measurements 
\[
\{\lvert \mathcal{W}_\phi f(\alpha^{-m}\beta n,\alpha^{-m}) \rvert: m\in\mathbb{N}, n\in\mathbb{Z}\}.
\]
This uniqueness result can be restated as the injectivity of the operator $\mathcal{A}_\phi$, cf. \eqref{eq:phaseretrievaloperator}, for the choices 
\[
\mathcal{M}=\{f \in L^2(\R) : \operatorname{supp} \widehat{f}\ \mbox{ is compact}\}\qquad\text{and}\qquad \Lambda=\alpha^{-\N}(\beta \mathbb{Z} \times \{1\}),
\]
whenever $\phi$ has a finite number of vanishing moments. Examples of real-valued (non-bandlimited) wavelets with a finite number of vanishing moments include the Poisson wavelets and the $n^{{\textrm  {th}}}$ Hermitian wavelet, i.e.~the $n^{{\textrm  {th}}}$ derivative of the Gaussian function, for every $n\in\mathbb{N}_{>0}$. In particular, $n=2$ corresponds to the Mexican hat wavelet. 

\subsection{Our contribution: wavelet sign retrieval without bandlimiting}
The above-mentioned results always make a bandlimitedness assumption either on the unknown signals or on the analyzing wavelets. In contrast to prior work, our results here guarantee the unique recovery of real-valued signals from the absolute value of their wavelet transform without bandlimiting constraints or other a priori knowledge on the signals. While uniqueness results on sign retrieval from samples for $\mathcal{M}=L^2(\R,\R)$ are missing to date, the current paper proposes that a positive uniqueness result can be established when instead of a single wavelet, \emph{three wavelets} are employed.

We first prove Theorem~\ref{thm:signretrievalanalytic} which guarantees injectivity of $\mathcal{A}_\phi$ if $\Lambda$ is a subset of the upper half plane with positive measure and $\phi$ is such that $\operatorname{Ran}\mathcal{W}_{\phi}\subseteq\mathscr{A}(\C^+)$, where $\mathscr{A}(\C^+)$ denotes the space of real-analytic functions on the upper-half plane. Our theorem applies to the Poisson wavelet, a classical real-valued wavelet \cite[Chapter~1,\S~7]{holschneider91}.

However, Theorem~\ref{thm:signretrievalanalytic} does not apply to the more challenging case where $\Lambda$ is a discrete set, and we address this problem in  Theorem~\ref{thm:multiwavelets} with a {\it multi-wavelet approach}. A {\it hyperbolic lattice} in the upper half plane is a countable and discrete set of the form 
\[
\Lambda(\beta,\alpha) := \{(\alpha^m\beta n, \alpha^m)\}_{m,n\in\mathbb{Z}},
\]
with $\alpha>1$ and $\beta>0$, see Figure~\ref{fig:fandg1} for an example.
Theorem~\ref{thm:multiwavelets} shows that the magnitudes of the multi-wavelet frame coefficients
\begin{equation}
\label{eq:threemagnitudemeasurements}  
\{\lvert \mathcal{W}_{\phi_i} f(b,a) \rvert: i\in\{1,2,3\}, (b,a)\in\Lambda(\beta,\alpha)\}
\end{equation}
uniquely determine $f$ up to a global sign if the analytic windows $\phi_1$, $\phi_2$, $\phi_3$ are suitable linear combinations of the \emph{Poisson wavelet}, defined in the Fourier domain by
\begin{equation*}
    \widehat{P}_p(\xi) := (2 \pi)^{p+1/2} |\xi|^p e^{-2 \pi |\xi|},\qquad\xi\in\mathbb{R},\quad p>0,
\end{equation*}
and its Hilbert transform $\mathscr{H}P_p$, and if $\Lambda$ is a hyperbolic lattice satisfying $\beta\ln(\alpha)\leq4\pi/(1+4p)$. For instance, we can choose $\phi_1=P_p$, $\phi_2=\mathscr{H}P_p$ and $\phi_3=P_p+\mathscr{H}P_p$.

As a consequence of Proposition~\ref{prop:Poissonframe}, the set
\begin{equation}\label{eq:waveletframe}
    \{T_bD_aP_p\}_{(b,a)\in\Lambda(\beta,\alpha)} \cup \{T_bD_a\mathscr{H}P_p\}_{(b,a)\in\Lambda(\beta,\alpha)}
\end{equation}
forms a frame for $L^2(\R,\R)$ if and only if $\beta\ln(\alpha)<2\pi/p$, where $T_b$ and $D_a$ denote the translation and dilation operators 
\begin{equation*}
    T_bf(x)=f(x-b) \mbox{ and } D_af(x)=a^{-\frac{1}{2}}f\left(a^{-1}x\right),
\end{equation*}
respectively. Theorem~\ref{thm:multiwavelets} states that by taking $(1+4p)/2p$ times the necessary density for \eqref{eq:waveletframe} to constitute a wavelet frame for $L^2(\R,\R)$, we can achieve uniqueness in the recovery of real-valued signals from the set of measurements \eqref{eq:threemagnitudemeasurements}. Therefore, we need roughly $3(1+4p)/4p$ 
times more samples than required for signal reconstruction using wavelet coefficients when the phases are not available. For example, if $p=1$, we would need $3.75$ times more samples to obtain uniqueness. It is still an open question whether three wavelets are indeed necessary or whether one, or two, of the analytic wavelets $P_p$ and $\mathscr{H}P_p$ (or a linear combination of the two) could also be sufficient to obtain uniqueness.

\subsection{Other related literature.}
In \cite{mallat2015phase} the authors prove that the magnitude of the Cauchy wavelet transform of a signal $f \in L^2(\R)$ uniquely determines its \emph{analytic representation}
\begin{equation*}
    \widehat f_+ (\xi) := 2 \widehat f (\xi) \boldsymbol{1}_{\xi > 0}, \qquad \xi \in \R,
\end{equation*}
up to a global constant phase factor. Here, $\boldsymbol{1}_\Xi$ denotes the characteristic function of the set $\Xi$. Since real-valued functions are uniquely determined by their analytic representation, one would be tempted to conclude that the magnitude of the Cauchy wavelet transform uniquely determines real-valued signals up to a global sign. However, the analytic representation $f_+$ up to a global phase factor does not uniquely determine the real-valued signal up to a global sign. Indeed, it is possible to construct real-valued functions $f,g \in L^2(\R)$ which do not agree up to a global sign but satisfy $g_+ = e^{i\alpha} f_+$, for some $\alpha \in \R$, see \cite[Remark~12]{baralawel2021}. In contrast with \cite{mallat2015phase}, our uniqueness theorem guarantees the unique recovery of the real-valued signals themselves, instead of their analytic representations. Moreover, the result in \cite{mallat2015phase} is a uniqueness result in the semi-discrete regime, while our main result is in the fully discrete setting. 

Building on the results in \cite{mallat2015phase}, in \cite{baralawel2021}
we restrict the signal class to analytic bandlimited signals to obtain a uniqueness result from sampled Cauchy
wavelet transform measurements. Again, the bandlimitedness assumption plays a crucial role to obtain a full sampling result. 

Finally, we point out that our result is reminiscent of the recent work in \cite{grohs2022multi}, where the authors prove uniqueness results from magnitudes of {\em multi-window} Gabor frame coefficients. In particular, the argument in the proof of Theorem~\ref{thm:multiwavelets} to obtain equation~\eqref{eq:reference} has been inspired by the results in \cite{grohs2022multi}.

\subsection*{Notation.} We set $\R_+ := (0,+\infty)$. For any $p\in[1,+\infty]$, we denote by $L^p(\R)$ the Banach space of functions $f:\R\rightarrow\C$ which are $p$-integrable with respect to the Lebesgue measure and we use the notation $\|\cdot\|_p$ for the corresponding norms. The \emph{Fourier transform} on $L^1(\R)$ is defined by
\[
\widehat{f}(\xi) := \int_{\R} f(x) {\rm e}^{-2\pi \mathrm{i} 
  x \xi } \,\mathrm{d} x, \qquad \xi \in \R,
\] 
and it extends to $L^2(\R)$ by a classical density argument. Finally, we denote by $\xi\mapsto\text{sgn}(\xi)$ the sign function.

\section{Preliminaries}
\subsection{Frames in Hilbert spaces.} 

The notion of a frame, which generalizes that of a Riesz basis in Hilbert spaces, is due to Duffin and Schaffer \cite{duffin1952class}.  Let $\mathcal{H}$ be a separable Hilbert space with inner product $\langle \cdot,\cdot\rangle$ and norm $\|\cdot\|$. A countable sequence of vectors $\{f_i\}_{i\in I}$ in $\mathcal{H}$ is a {\it frame} for $\mathcal{H}$ if there exist constants $A,B>0$ such that for all $f\in\mathcal{H}$
\[
A\|f\|^2\leq\sum_{i\in I}|\langle f,f_i\rangle|^2\leq B \|f\|^2.
\]
A direct consequence of the lower inequality is that every $f\in\mathcal{H}$ is uniquely determined by its {\it frame coefficients} $\{\langle f,f_i\rangle\}_{i\in I}$, that is 
\[
\langle f,f_i\rangle=0,\ \forall i\in I \implies f=0.
\]
If a frame is obtained via translated and scaled versions $\{T_bD_a\phi\}_{(b,a)\in I}$ of a fixed function $\phi$, the analyzing wavelet, we call it a {\it wavelet frame}. A generalization of wavelet frames is obtained by considering $k\in\mathbb{N}$, $k>1$, analyzing wavelets instead of a single one. In this latter case, we talk about {\it multi-wavelet frames}. We refer to \cite{christensen2003introduction, daub92} for an introduction to frame theory and an overview on wavelet frames. 

\subsection{Weighted Bergman spaces}
We denote by $\mathbb{C}^+$ the upper half plane,
\[
\mathbb{C}^+=\{z\in\C : \text{Im}\,z>0\},
\]
and by $\mathcal{O}(\mathbb{C}^+)$ the space of holomorphic functions on $\mathbb{C}^+$.
For every $w>1$, we define the {\it weighted Bergman space} $B_{w}(\mathbb{C}^+)$ by
\[
B_w(\mathbb{C}^+) := \left\{F\in\mathcal{O}(\mathbb{C}^+) : \int_{\R\times\R_+}|F(x+iy)|^2y^{w-2}\,{\rm d}x\,{\rm d}y<\infty\right\}.
\]
We say that a discrete subset $\Lambda$ of $\mathbb{C}^+$ is a {\it set of sampling} for $B_{w}(\mathbb{C}^+)$ if there exist positive constants $A$ and $B$ such that
\begin{multline*}
A \int_{\R\times\R_+}|F(x+iy)|^2y^{w-2}{\rm d}x{\rm d}y\leq\sum_{z_j=x_j+iy_j\in\Lambda}|F(x_j+iy_j)|^2y_j^w \\ \leq B \int_{\R\times\R_+}|F(x+iy)|^2\,y^{w-2}\,{\rm d}x\,{\rm d}y,
\end{multline*}
for every $F\in B_{w}(\mathbb{C}^+)$. 

\begin{thm}[{\cite[Theorem~1.1]{seip1993regular}}]\label{thm:seip1}
Let $w>1$. For every $\alpha>1$ and $\beta>0$, the discrete set
\[
\Gamma(\beta,\alpha)=\{\alpha^m(\beta n+i)\}_{m,n\in\mathbb{Z}}
\]
is a set of sampling for $B_{w}(\mathbb{C}^+)$ if and only if 
\[
\beta\ln(\alpha)<4\pi/(w-1).
\]
\end{thm}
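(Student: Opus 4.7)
The plan is to transfer the question to an equivalent density condition for sampling in a weighted Bergman space on the unit disk, where Seip's general density characterization can be applied, and then compute the density of the particular lattice $\Gamma(\beta,\alpha)$ explicitly. The conformal change of variables and the hyperbolic geometry of the upper half plane are the central geometric tools.

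First, I would pull back $B_w(\mathbb{C}^+)$ via the Cayley transform $\Phi(\zeta) = i(1+\zeta)/(1-\zeta)$ from the unit disk $\mathbb{D}$ to $\mathbb{C}^+$. With an appropriate conformal factor chosen to absorb the Jacobian (morally a power of $(1-\zeta)^{-1}$), this yields an isometric isomorphism between $B_w(\mathbb{C}^+)$ and the standard weighted Bergman space $A^2_{w-2}(\mathbb{D})=\{G\in\mathcal{O}(\mathbb{D}):\int_\mathbb{D}|G(\zeta)|^2(1-|\zeta|^2)^{w-2}\,\mathrm{d}A(\zeta)<\infty\}$. Sampling sets for $B_w(\mathbb{C}^+)$ correspond precisely to sampling sets for $A^2_{w-2}(\mathbb{D})$ under $\Phi^{-1}$, so the whole problem translates into a disk problem in which the natural geometry is the hyperbolic one.

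Second, I would analyze the image $\Phi^{-1}(\Gamma(\beta,\alpha))$. The crucial structural observation is that $\Gamma(\beta,\alpha)$ is invariant under the discrete subgroup of $\mathrm{PSL}(2,\mathbb{R})$ generated by the hyperbolic isometries $z\mapsto z+\beta$ and $z\mapsto \alpha z$ of $\mathbb{C}^+$. Hence its image in the disk is uniformly separated in the pseudohyperbolic metric, and its lower hyperbolic (Seip) density can be computed exactly by counting lattice points in a fundamental domain of this action. A direct calculation shows that the density is a fixed constant multiple of $(\beta\ln\alpha)^{-1}$.

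Finally, I would invoke Seip's density theorem for weighted Bergman spaces on $\mathbb{D}$: a uniformly separated sequence is a set of sampling for $A^2_{w-2}(\mathbb{D})$ if and only if its lower pseudohyperbolic density strictly exceeds $(w-1)/(4\pi)$ in the relevant normalization. Matching this critical threshold against the explicit density of $\Gamma(\beta,\alpha)$ yields precisely $\beta\ln\alpha<4\pi/(w-1)$, with the strictness coming from the known fact that at the critical density one can construct nontrivial functions in $A^2_{w-2}$ vanishing on the lattice. The main obstacle is of course Seip's density characterization itself: its proof requires sharp Bergman-kernel estimates, a submean-value/concentration inequality for holomorphic functions on pseudohyperbolic disks to obtain the sampling inequality, and, for the necessary direction, the construction (typically via $\bar{\partial}$-techniques or a Khinchin-type Blaschke product argument) of nonzero elements of $A^2_{w-2}$ vanishing on $\Lambda$ whenever its density falls below the critical value. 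Once this machinery is granted, the identification of $4\pi/(w-1)$ with the density of $\Gamma(\beta,\alpha)$ is a routine computation.
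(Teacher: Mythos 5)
First, note that the paper does not prove this statement at all: it is quoted from \cite{seip1993regular} and used as a black box, so there is no internal proof to compare your attempt against. Your outline is a reasonable reconstruction of the standard modern route (transfer to the disk via the Cayley transform, invoke the Beurling--Seip density characterization of sampling sets for weighted Bergman spaces, and compute the density of the hyperbolic lattice), but as written it reduces the cited theorem to an even deeper cited theorem, and the two steps you present as routine both contain genuine errors.

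Concretely: (1) $\Gamma(\beta,\alpha)$ is \emph{not} invariant under the group generated by $z\mapsto z+\beta$ and $z\mapsto\alpha z$: applying $z\mapsto z+\beta$ to the lattice point $\alpha i$ gives $\beta+\alpha i$, which is not of the form $\alpha\beta n+\alpha i$. The lattice is invariant only under $z\mapsto\alpha z$, with the row at height $\alpha^m$ invariant under translation by $\alpha^m\beta$. More importantly, the quantity entering Seip's density theorem is the logarithmically weighted Beurling-type density, not a count of points per unit hyperbolic area: the tile $[\alpha^m\beta n,\alpha^m\beta(n+1))\times[\alpha^m,\alpha^{m+1})$ has hyperbolic area $\beta(1-\alpha^{-1})$ and contains one lattice point, so the naive fundamental-domain count gives $1/(\beta(1-\alpha^{-1}))$, which is \emph{not} a constant multiple of $(\beta\ln\alpha)^{-1}$ (the two agree only as $\alpha\to1$). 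Producing the $\ln\alpha$ in the answer is exactly the content of the (elementary but not trivial) density computation, and it cannot be waved through as a point count. (2) Your explanation of the strictness at the endpoint --- that at critical density one can construct nontrivial functions in $A^2_{w-2}$ vanishing on the lattice --- is false, and is contradicted by Theorem~\ref{thm:seip} of this very paper, which asserts that at $\beta\ln(\alpha)=4\pi/(w-1)$ the lattice \emph{is} a uniqueness set for $B_w(\mathbb{C}^+)$; the main theorem of the paper relies on precisely this fact to include the equality case $\beta\ln(\alpha)=4\pi/(1+4p)$. The failure of sampling at critical density is a failure of the uniform lower sampling bound (shown, e.g., by a point-removal plus normal-families/weak-limit argument, or by the strict inequality built into Seip's density theorem), not a failure of uniqueness.
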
 
Furthermore, we say that a discrete subset $\Lambda$ of $\mathbb{C}^+$ is a {\it uniqueness set} for $B_{w}(\mathbb{C}^+)$ if for every $F\in B_{w}(\mathbb{C}^+)$ it holds that
\[
F(\lambda)=0, \ \forall\lambda\in\Lambda \implies F\equiv0.
\]
Clearly, sampling sets are also uniqueness sets.
\
\begin{thm}[\cite{seip1993regular}]\label{thm:seip}
Let $w>1$. For every choice of $\alpha>1$ and $\beta>0$ satisfying 
\[
\beta\ln(\alpha)=4\pi/(w-1),
\]
the discrete set
\[
\Gamma(\beta,\alpha)=\{\alpha^m(\beta n+i)\}_{m,n\in\mathbb{Z}}
\]
is a uniqueness set for $B_{w}(\mathbb{C}^+)$.
\end{thm}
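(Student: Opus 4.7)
The strategy is to transfer the problem from the upper half plane $\mathbb{C}^+$ to the unit disk $\mathbb{D}$ via the Cayley transform and then to employ a canonical-product argument analogous to the one underpinning Theorem~\ref{thm:seip1}. The sampling characterization in Theorem~\ref{thm:seip1} identifies the critical density as $\beta\ln\alpha = 4\pi/(w-1)$; at this precise value I expect sampling to fail but uniqueness to persist, which is the content of the present theorem.

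First, I would set up the isometric correspondence between $B_w(\mathbb{C}^+)$ and the classical weighted Bergman space $A^2_{w-2}(\mathbb{D})$ under the Cayley transform $\varphi(z)=(z-i)/(z+i)$, via the weighted composition operator $(UF)(\zeta) := 2(1-\zeta)^{-w} F(\varphi^{-1}(\zeta))$, which (after absorbing a $|1-\zeta|^{-2w}$ factor) converts the measure $y^{w-2}\,{\rm d}x\,{\rm d}y$ on $\mathbb{C}^+$ into $(1-|\zeta|^2)^{w-2}\,{\rm d}A(\zeta)$ on $\mathbb{D}$. Under this correspondence, the hyperbolic lattice $\Gamma(\beta,\alpha)$ is mapped to a regular discrete set $\Gamma'\subset \mathbb{D}$ generated by iterating two Möbius self-maps of $\mathbb{D}$, and the product $\beta\ln\alpha$ directly controls its hyperbolic density.

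Second, I would build (following Seip's original construction) a canonical product $B\in\mathcal{O}(\mathbb{D})$ vanishing exactly on $\Gamma'$ with sharp boundary behaviour that places $|B(\zeta)|^2 (1-|\zeta|^2)^{w-2}$ precisely on the borderline of integrability --- non-integrable at the critical density, integrable just below it. If $F\in B_w(\mathbb{C}^+)$ were a nontrivial function vanishing on $\Gamma(\beta,\alpha)$, then its transferred image $UF \in A^2_{w-2}(\mathbb{D})$ would factor as $UF=B\cdot H$ with $H\in\mathcal{O}(\mathbb{D})$, and a comparison of the enforced decay of $H$ (forced by $UF\in A^2_{w-2}(\mathbb{D})$) against the critical growth of $B$ would yield $H\equiv 0$, hence $F\equiv 0$.

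The main obstacle is the precise asymptotic control of the canonical product at critical density. Subcritically the product already belongs to $A^2_{w-2}(\mathbb{D})$ (and the argument essentially reduces to the sampling statement of Theorem~\ref{thm:seip1}), supercritically it fails even to converge in a meaningful sense, and the critical case is exactly the delicate threshold where both phenomena meet; this is where Seip's machinery of regular sets and hyperbolic densities must be exploited in its sharp form. An alternative perturbative route --- comparing $\Gamma(\beta,\alpha)$ with nearby subcritical lattices $\Gamma(\beta',\alpha)$, $\beta'<\beta$, and extracting uniqueness from their sampling property --- looks harder to carry through, since $\Gamma(\beta',\alpha)$ and $\Gamma(\beta,\alpha)$ are disjoint at every scale $m$, so the vanishing of $F$ on $\Gamma(\beta,\alpha)$ does not directly feed into any sampling inequality for $\Gamma(\beta',\alpha)$.
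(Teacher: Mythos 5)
First, a point of comparison: the paper does not prove this statement at all --- Theorem~\ref{thm:seip} is imported directly from Seip's work \cite{seip1993regular}, so there is no in-paper argument to measure your attempt against. Judged on its own terms, your outline follows the route one would expect: Cayley transform to the disk, a canonical product $B$ vanishing on the transferred lattice $\Gamma'$, the factorization $UF=B\cdot H$, and a growth comparison forcing $H\equiv 0$. The subcritical/critical/supercritical trichotomy you describe is also the right picture, and your reason for rejecting the perturbative route is sound.

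As a proof, however, the proposal has a genuine gap, and you have located it yourself: everything hinges on constructing $B$ with a \emph{sharp two-sided} estimate of the form $|B(\zeta)|\asymp(1-|\zeta|^2)^{(w-1)/2}$ up to a factor bounded above and below away from small hyperbolic neighbourhoods of $\Gamma'$, and that estimate is precisely the analytic content of the theorem, not a technicality that can be deferred. Without the lower bound on $|B|$ off its zero set, the integrability of $|BH|^2(1-|\zeta|^2)^{w-2}$ gives no control whatsoever on $H$; without the matching upper bound (equivalently, the borderline non-integrability of $|B|^2(1-|\zeta|^2)^{w-2}$), you cannot exclude that $B$ itself lies in $A^2_{w-2}(\mathbb{D})$, in which case $\Gamma(\beta,\alpha)$ would \emph{fail} to be a uniqueness set and the theorem would be false. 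The closing step, ``the enforced decay of $H$ yields $H\equiv 0$,'' also needs an actual argument --- for instance subharmonicity of $\log|H|$ together with the mean-value inequality to convert $L^2$-smallness near the boundary into pointwise vanishing --- since a nonzero holomorphic function on $\mathbb{D}$ can perfectly well tend to zero along large portions of the boundary. In short: the skeleton matches the standard machinery behind Seip's result, but the proposal defers exactly the estimates that constitute the proof, so it should be regarded as a plan rather than a proof.
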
 
A general treatment of sets of sampling for weighted Bergman spaces can be found in \cite{seip199nonregular}.
\begin{figure}
    \centering
\includegraphics[scale=0.6]{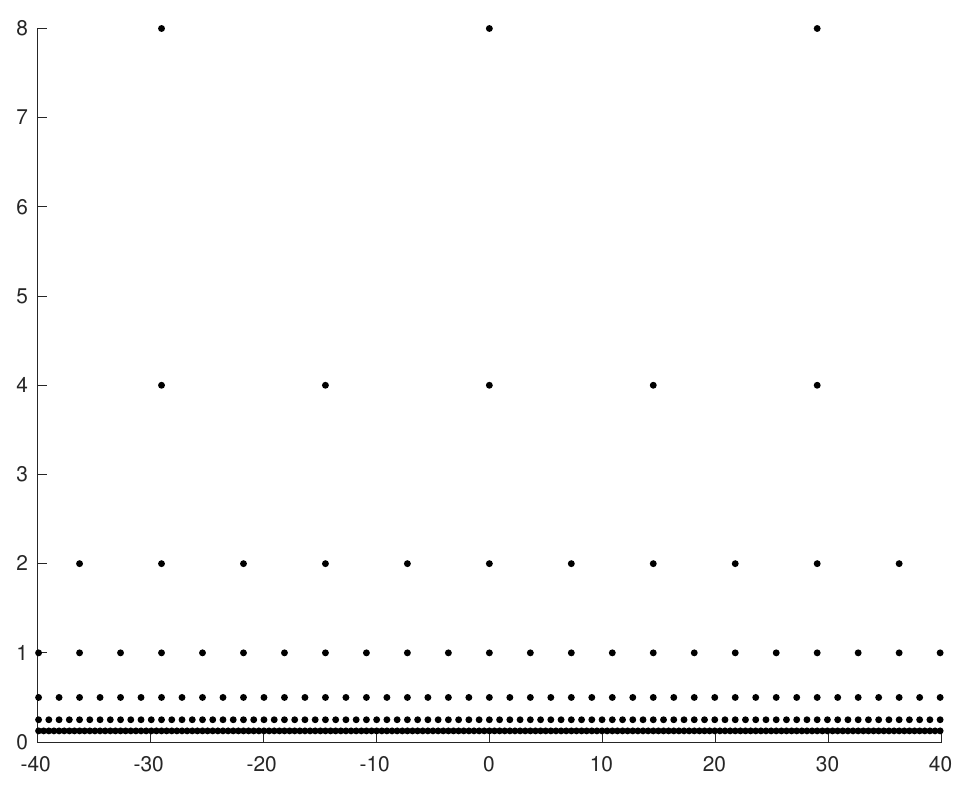}
    \caption{The lattice $\Lambda(\beta,\alpha)$ with $\beta=4\pi/5\ln(2)$ and  $\alpha=2$ constitutes a uniqueness set for the weighted Bergman space $B_{6}(\mathbb{C}^+)$.}
    \label{fig:fandg1}
\end{figure}

The weighted Bergman spaces appear in the characterization of the range of the wavelet transform for a special choice of the analyzing wavelets: the so-called Cauchy wavelets.

 \subsection{Cauchy wavelets} 
 The {\it Cauchy wavelets} are progressive wavelets, i.e. wavelets with only positive frequencies, with real-valued Fourier transforms defined by
\[
    \widehat{\psi}_p(\xi):=(2\pi)^{p + 1/2}\xi^p\text{e}^{-2\pi \xi}\boldsymbol{1}_{\xi>0},\quad\xi\in\R,
\]
where $p>0$. We abbreviate the case $p=1$ by $\psi=\psi_1$. The wavelet transform with respect to any Cauchy wavelet is referred to as the {\it Cauchy wavelet transform} and is defined by 
\[
    \mathcal{W}_{\psi_p}f(b,a):=(2\pi a)^{\frac{1}{2}+p}\int_{0}^{+\infty}\widehat{f}(\xi)\,\text{e}^{2\pi i (b+ i a)\xi}\,\xi^p\,{\rm d}\xi,\quad (b,a)\in\R\times\R_+.
\]
The Cauchy wavelets satisfy the admissibility condition  
\[
C_{\psi_p}=\int_{0}^{+\infty}\frac{|\widehat{\psi}_p(\xi)|^2}{\xi}{\rm d}\xi<\infty,
\]
and consequently the Cauchy wavelet transform is a multiple of an isometry from $\mathcal{H}_+(\R)$ into $L^2(\R\times\R_+,{\rm d}b\,{\rm d}a/a^2)$,
\begin{equation}\label{eq:waveletisometry}
\|\mathcal{W}_{\psi_p}f\|_{L^2(\R\times\R_+,{\rm d}b\,{\rm d}a/a^2)}^2=C_{\psi_p}\|f\|_2^2,
\end{equation}
where $\mathcal{H_+}(\R)$ denotes the \emph{Hardy space},
\[
\mathcal{H}_+(\R):=\{f\in L^2(\R) : \forall \xi < 0,~\widehat{f}(\xi)=0\}.
\]
The weighted Bergman spaces relate to the Cauchy wavelet transform as the Bargmann--Fock space does to the Gabor transform. Indeed, the map 
\begin{equation}\label{eq:mapbergmanwavelet}
    \mathcal{H}_+(\R)\ni f\mapsto F\colon{\mathbb{C}^+}\to\mathbb{C},\quad F(b+ia)=a^{-(\frac{1}{2}+p)}\mathcal{W}_{\psi_p}f(b,a),    
\end{equation}
defines a multiple of an isometry from $\mathcal{H}_+(\R)$ onto the weighted Bergman space $B_{2p+1}(\mathbb{C}^+)$,
\[
\int_{\R\times\R_+} |F(b+ia)|^2 a^{2p-1}\,{\rm d}b\,{\rm d}a=C_{\psi_p}\|f\|_2^2,
\]
see \cite[\S~3.2]{grossmann1986transforms} and the references therein.

\subsection{Poisson wavelets} 
The {\it Poisson wavelet} is a real-valued wavelet with real-valued Fourier transform given by
\begin{equation}\label{eq:PoissonFourier}
    \widehat{P}(\xi)=\sqrt{2\pi} |\xi|e^{-2\pi|\xi|},\qquad\xi\in\mathbb{R},
\end{equation}
see e.g.~\cite[Chapter~1,\S~7]{holschneider91}.
Equation~\eqref{eq:PoissonFourier} implies that the Poisson wavelet is related to the Cauchy wavelet via 
\begin{equation*}
2\psi= P_+=P+i\mathscr{H}P,
\end{equation*}
where $\mathscr{H}$ denotes the Hilbert transform. We recall that the Hilbert transform is a unitary operator on $L^2(\R)$ defined by 
\begin{equation*}
(\mathscr{H}\varphi)\;\widehat{}\;(\xi) = -i\, \text{sgn}(\xi)\widehat{\varphi}(\xi),\quad \text{for a.e. } \xi\in\R.
\end{equation*}
In particular, the Hilbert transform of a real-valued function remains real-valued. 
In general, for every $p>0$, we can define the {\it Poisson wavelet of order $p$} as
\begin{equation*}
    \widehat{P}_p(\xi):=
    (2\pi)^{p+1/2}|\xi|^p e^{-2\pi|\xi|},\qquad\xi\in\mathbb{R},
\end{equation*}
which is equivalent to
\begin{equation}\label{eq:poissonwavelets}
2\psi_p=P_p+i\mathscr{H}P_p.
\end{equation}

 The next proposition shows that the wavelet transform of any real-valued signal with respect to a Poisson wavelet corresponds to the real part of its Cauchy wavelet transform. Analogously, the wavelet transform of any real-valued signal with respect to the Hilbert transform of a Poisson wavelet relates to the imaginary part of its Cauchy wavelet transform. 
 
\begin{prop}\label{prop:cauchyandpoisson}
    Let $p>0$. For every $f\in L^2(\R,\R)$,
    \begin{align}\label{eq:wavelettransformcauchypoisson}
    \mathcal{W}_{P_p}f&=2\operatorname{Re}[\mathcal{W}_{\psi_p}f],\quad \mathcal{W}_{\mathscr{H}P_p}f=-2\operatorname{Im}[\mathcal{W}_{\psi_p}f].
    \end{align}
    In particular, $\mathcal{W}_{P_p}f$ and $\mathcal{W}_{\mathscr{H}P_p}f$ are real analytic functions on the upper half-plane.
\end{prop}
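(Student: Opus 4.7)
The plan is to reduce both identities to the algebraic decomposition provided by equation \eqref{eq:poissonwavelets} and the behaviour of the wavelet transform under complex conjugation of the window. Since $P_p$ and $\mathscr{H}P_p$ are real-valued (the Hilbert transform of a real-valued function remains real-valued), the identity $2\psi_p = P_p + i\mathscr{H}P_p$ separates into real and imaginary parts as
\[
P_p = \psi_p + \overline{\psi_p}, \qquad \mathscr{H}P_p = -i\bigl(\psi_p - \overline{\psi_p}\bigr).
\]

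Next, I would use that for any real-valued $f \in L^2(\R,\R)$ and any $\phi \in L^2(\R)$,
\[
\mathcal{W}_{\overline{\phi}} f(b,a) = a^{-\frac{1}{2}} \int_{\R} f(x)\,\phi\!\left(\tfrac{x-b}{a}\right) \ddm x = \overline{\mathcal{W}_\phi f(b,a)},
\]
directly from the definition, because $f(x)$ is real. Combined with conjugate-linearity of $\phi \mapsto \mathcal{W}_\phi f$ and the two decompositions above, this yields
\[
\mathcal{W}_{P_p} f = \mathcal{W}_{\psi_p} f + \overline{\mathcal{W}_{\psi_p} f} = 2\operatorname{Re}\bigl[\mathcal{W}_{\psi_p}f\bigr],
\]
and
\[
\mathcal{W}_{\mathscr{H}P_p} f = i\,\mathcal{W}_{\psi_p} f - i\,\overline{\mathcal{W}_{\psi_p} f} = -2\operatorname{Im}\bigl[\mathcal{W}_{\psi_p}f\bigr],
\]
which are the two equalities in \eqref{eq:wavelettransformcauchypoisson}.

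For the real-analyticity statement, I would start from the Fourier-side formula
\[
\mathcal{W}_{\psi_p} f(b,a) = a^{\frac{1}{2}+p} \int_{0}^{\infty} \widehat{f}(\xi)\, e^{2\pi i b \xi - a \xi}\, \xi^p\, \ddm \xi,
\]
and observe that for every $\xi > 0$ the kernel $(b,a)\mapsto e^{2\pi i b\xi - a\xi}\xi^p$ is real analytic in $(b,a)\in\R\times\R_+$. Thanks to the Gaussian-type decay $e^{-a\xi}$ on any compact subset of $\R\times\R_+$ (where $a$ stays bounded away from $0$), all $(b,a)$-derivatives of the integrand are dominated by an $L^1$ function of $\xi$ against $\widehat{f}\in L^2$ (via Cauchy--Schwarz with $\xi^k e^{-a\xi}\in L^2(0,\infty)$). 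Hence differentiation under the integral is justified to all orders, and $F(b,a):=a^{-(1/2+p)}\mathcal{W}_{\psi_p}f(b,a)$ — in fact the same function that represents $f$ in the weighted Bergman space via \eqref{eq:mapbergmanwavelet} — is real analytic on $\R\times\R_+$. Multiplying by the real-analytic factor $a^{1/2+p}$ preserves real-analyticity, and so do the operations $\operatorname{Re}$ and $\operatorname{Im}$. Applying the two identities already derived, we conclude that $\mathcal{W}_{P_p}f$ and $\mathcal{W}_{\mathscr{H}P_p}f$ are real analytic on the upper half-plane.

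The only non-trivial point is the interchange of differentiation and integration needed for real-analyticity, but this is routine given the exponential kernel; everything else is algebraic bookkeeping with the conjugation relation $\mathcal{W}_{\overline{\phi}}f = \overline{\mathcal{W}_\phi f}$ valid for real $f$.
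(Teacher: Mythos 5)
Your proof is correct and follows essentially the same route as the paper: both arguments rest on the decomposition $2\psi_p = P_p + i\mathscr{H}P_p$ together with the realness of $f$, $P_p$ and $\mathscr{H}P_p$ (the paper phrases this via conjugate-linearity of $\langle f, T_bD_a\,\cdot\,\rangle$, you via the conjugation rule $\mathcal{W}_{\overline{\phi}}f = \overline{\mathcal{W}_\phi f}$), and both deduce real-analyticity from the holomorphy of $b+ia \mapsto a^{-(\frac{1}{2}+p)}\mathcal{W}_{\psi_p}f(b,a)$ recorded in \eqref{eq:mapbergmanwavelet}. One small caution: justifying differentiation under the integral to all orders only yields smoothness, not real-analyticity, so the analyticity claim should rest on the holomorphy of that function (whose real and imaginary parts are harmonic, hence real analytic), which your parenthetical reference to the weighted Bergman space already supplies.
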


\begin{proof}
    Let $p>0$ and $f\in L^2(\R,\R)$. By equation~\eqref{eq:poissonwavelets}, and the definition of the wavelet transform, we obtain
    \begin{align*}
        2 \mathcal{W}_{\psi_p}f(b,a)=2 \langle f, T_bD_a\psi_p\rangle&=\langle f, T_bD_a\left(P_p+i\mathscr{H}P_p\right)\rangle,\\
        &=\langle f, T_bD_aP_p\rangle-i\langle f,T_bD_a\mathscr{H}P_p\rangle,\\
        &=\mathcal{W}_{P_p}f(b,a)-i\mathcal{W}_{\mathscr{H}P_p}f(b,a),
    \end{align*}
    for $(b,a)\in\R\times\R_+$, which implies equation~\eqref{eq:wavelettransformcauchypoisson}.
    Equivalently, for every $(b,a)\in\R\times\R_+,$ we have that
    \begin{align*}
        &\mathcal{W}_{P_p}f(b,a)=2a^{\frac{1}{2}+p}\text{Re}[a^{-(\frac{1}{2}+p)}\mathcal{W}_{\psi_p}f(b,a)],\\
        &\mathcal{W}_{\mathscr{H}P_p}f(b,a)=-2a^{\frac{1}{2}+p}\text{Im}[a^{-(\frac{1}{2}+p)}\mathcal{W}_{\psi_p}f(b,a)],
    \end{align*}
    which, together with equation~\eqref{eq:mapbergmanwavelet}, shows that $\mathcal{W}_{P_p}f$ and $\mathcal{W}_{\mathscr{H}P_p}f$ are real analytic functions on the upper half-plane.
\end{proof}

As a consequence of Theorem~\ref{thm:seip1} and Proposition~\ref{prop:cauchyandpoisson}, the Poisson wavelet and its Hilbert transform  give rise to discrete frames for $L^2(\R,\R)$. More precisely, we have the following result.

\begin{prop}\label{prop:Poissonframe}
    The set
    \begin{equation}\label{eq:waveletframe2}
    \{T_bD_aP_p\}_{(b,a)\in\Lambda(\beta,\alpha)}\cup \{
    T_bD_a\mathscr{H}P_p\}_{(b,a)\in\Lambda(\beta,\alpha)}
    \end{equation}
    constitutes a frame for $L^2(\R,\R)$ if and only if 
    $\beta\ln(\alpha)<2\pi/p$.
\end{prop}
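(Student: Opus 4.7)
The plan is to reduce the frame inequality for the real-valued system \eqref{eq:waveletframe2} to the sampling inequality for the weighted Bergman space $B_{2p+1}(\mathbb{C}^+)$ on the hyperbolic lattice $\Gamma(\beta,\alpha)$, and then apply Seip's theorem (\Cref{thm:seip1}) with $w = 2p+1$, noting that the condition $\beta\ln(\alpha) < 4\pi/(w-1)$ becomes exactly $\beta\ln(\alpha) < 2\pi/p$.

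First, for $f \in L^2(\R,\R)$, I would use \Cref{prop:cauchyandpoisson} to rewrite the square-sum of frame coefficients pointwise as
\[
\lvert\langle f, T_bD_aP_p\rangle\rvert^2 + \lvert\langle f, T_bD_a\mathscr{H}P_p\rangle\rvert^2 = 4\lvert\mathcal{W}_{\psi_p}f(b,a)\rvert^2,
\]
so the two Poisson-type systems pooled together behave like the modulus squared of a single Cauchy wavelet transform. Next, I would pass to the analytic signal $f_+ \in \mathcal{H}_+(\R)$, using that $f \mapsto f_+$ is (a multiple of) an isometry from $L^2(\R,\R)$ onto $\mathcal{H}_+(\R)$ with $\|f_+\|_2^2 = 2\|f\|_2^2$, and that $\mathcal{W}_{\psi_p}f = \tfrac{1}{2}\mathcal{W}_{\psi_p}f_+$ since $\widehat{\psi_p}$ is supported in $(0,\infty)$. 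Then the map \eqref{eq:mapbergmanwavelet} identifies $\mathcal{H}_+(\R)$ with $B_{2p+1}(\mathbb{C}^+)$: setting $F(b+ia) := a^{-(1/2+p)}\mathcal{W}_{\psi_p}f_+(b,a)$, we have $F \in B_{2p+1}(\mathbb{C}^+)$ with $\|F\|_{B_{2p+1}}^2 = C_{\psi_p}\|f_+\|_2^2 = 2C_{\psi_p}\|f\|_2^2$, and this correspondence is surjective.

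Evaluating at lattice points $(b,a) = (\alpha^m\beta n,\alpha^m)$, the weight $a^{2p+1}$ in the Bergman-sampling sum cancels with $a^{-(1+2p)}$ from the definition of $F$, so that
\[
\sum_{(b,a)\in\Lambda(\beta,\alpha)} \lvert F(b+ia)\rvert^2 a^{2p+1} = \sum_{(b,a)\in\Lambda(\beta,\alpha)} \lvert\mathcal{W}_{\psi_p}f_+(b,a)\rvert^2 = 4\sum_{(b,a)\in\Lambda(\beta,\alpha)} \lvert\mathcal{W}_{\psi_p}f(b,a)\rvert^2.
\]
Combining this with the first identity above and the fact that $\|F\|_{B_{2p+1}}^2 \asymp \|f\|_2^2$, the frame inequality for \eqref{eq:waveletframe2} on $L^2(\R,\R)$ is equivalent to $\Gamma(\beta,\alpha)$ being a sampling set for $B_{2p+1}(\mathbb{C}^+)$. \Cref{thm:seip1} then gives the if-and-only-if condition $\beta\ln(\alpha) < 2\pi/p$.

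The main technical point to get right is that \emph{both} frame inequalities (lower bound and Bessel bound) transfer through these identifications, and in particular that the isometric correspondence $L^2(\R,\R) \leftrightarrow \mathcal{H}_+(\R) \leftrightarrow B_{2p+1}(\mathbb{C}^+)$ is bijective; the expected obstacle is bookkeeping of the constants and checking that restricting to real-valued $f$ does not shrink the image inside $B_{2p+1}(\mathbb{C}^+)$, since then the sampling property for $B_{2p+1}(\mathbb{C}^+)$ would be strictly stronger than what the frame property requires. This is resolved by observing that every $g \in \mathcal{H}_+(\R)$ arises as $f_+$ for the real-valued $f = \operatorname{Re} g \in L^2(\R,\R)$.
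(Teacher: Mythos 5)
Your proposal is correct and follows essentially the same route as the paper's proof: reduce the pooled Poisson/Hilbert-transform coefficients to $4\lvert\mathcal{W}_{\psi_p}f\rvert^2$ via Proposition~\ref{prop:cauchyandpoisson}, pass through $f\mapsto f_+$ to $\mathcal{H}_+(\R)$ and then to $B_{2p+1}(\mathbb{C}^+)$ via the map~\eqref{eq:mapbergmanwavelet}, and invoke Theorem~\ref{thm:seip1} with $w=2p+1$. Your explicit check that $f\mapsto f_+$ is onto $\mathcal{H}_+(\R)$ (so the restriction to real-valued signals does not weaken the required sampling inequality) is a point the paper passes over silently, and it is worth stating.
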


\begin{proof}
    The set in equation~\eqref{eq:waveletframe2} is a frame for $L^2(\R,\R)$ if and only if there exist constants $A,B>0$ such that 
    \begin{equation}\label{eq:frameequivalence1}
        A \|f\|_2^2\leq\sum_{(b,a)\in \Lambda(\beta,\alpha)}(|\mathcal{W}_{P_p}f(b,a)|^2+|\mathcal{W}_{\mathscr{H}P_p}f(b,a)|^2)\leq B \|f\|_2^2,
    \end{equation}
    for every $f\in L^2(\R,\R)$.
    By Proposition~\ref{prop:cauchyandpoisson}, equation \eqref{eq:frameequivalence1} is equivalent to
    \begin{equation}\label{eq:frameequivalence2}
        \frac{A}{4} \cdot \|f\|_2^2\leq\sum_{(b,a)\in \Lambda(\beta,\alpha)}|\mathcal{W}_{\psi_p}f(b,a)|^2\leq \frac{B}{4} \cdot \|f\|_2^2, \mbox{ for all } f\in L^2(\R,\R).
    \end{equation}
    We recall that any real-valued function satisfies 
    \[
    \widehat{f}(-\xi)=\overline{\widehat{f}(\xi)},\quad\xi\in\R.
    \]
    Consequently, $L^2(\R,\R)$ is isomorphic to the Hardy space $\mathcal{H}_+(\R)$ via the mapping
    \begin{equation*}
    L^2(\R,\R)\ni f\mapsto f_+\in \mathcal{H}_+(\R),
    \end{equation*}
    which satisfies
    \begin{equation*}
       \|f\|_2=\frac{ \|f_+\|_2}{\sqrt{2}}.
    \end{equation*}
    
    Furthermore, we can see that the wavelet transform with respect to a Cauchy wavelet satisfies 
    \begin{align*}
        \mathcal{W}_{\psi_p} f (b,a) &= \sqrt{a} \cdot \int_\R \widehat f (\xi) \overline{\widehat \psi_p (a\xi)} \mathrm{e}^{2\pi\mathrm{i}\xi b} \, \mathrm{d} \xi = \sqrt{a} \cdot \int_0^\infty \widehat f (\xi) \overline{\widehat \psi_p (a\xi)} \mathrm{e}^{2\pi\mathrm{i}\xi b} \, \mathrm{d} \xi \\ &= \frac{\sqrt{a}}{2} \cdot \int_0^\infty \widehat f_+ (\xi) \overline{\widehat \psi_p (a\xi)} \mathrm{e}^{2\pi\mathrm{i}\xi b} \, \mathrm{d} \xi = \frac12 \cdot \mathcal{W}_{\psi_p} f_+ (b,a),
    \end{align*}
    where the first equality follows by Plancherel's theorem. Therefore, equation~\eqref{eq:frameequivalence2} is equivalent to
    \begin{equation}\label{eq:frameequivalence3}
        \frac{A}{2} \cdot \|f\|_2^2\leq\sum_{(b,a)\in \Lambda(\beta,\alpha)}|\mathcal{W}_{\psi_p}f(b,a)|^2\leq \frac{B}{2} \cdot \|f\|_2^2,\mbox{ for all } f\in \mathcal{H}_+(\R).
    \end{equation}
    Now, since the map 
    \begin{equation*}
        \mathcal{H}_+(\R)\ni f\mapsto F\colon{\mathbb{C}^+}\to\mathbb{C},\quad F(b+ia)=a^{-(\frac{1}{2}+p)}\mathcal{W}_{\psi_p}f(b,a),    
    \end{equation*}
    is a multiple of an isometry from $\mathcal{H}_+(\R)$ onto the weighted Bergman space $B_{2p+1}(\mathbb{C}^+)$ (cf.~equation~\eqref{eq:mapbergmanwavelet}), equation~\eqref{eq:frameequivalence3} becomes
    \begin{multline}\label{eq:frameequivalence4}
        \frac{A}{2 C_{\psi_p}} \cdot \int_{\R\times\R_+}|F(x+iy)|^2y^{2p-1}{\rm d}x{\rm d}y \leq \sum_{x+iy\in \Gamma(\beta,\alpha)}|F(x+iy)|^2y^{2p+1} \\ 
        \leq \frac{B}{2 C_{\psi_p}} \cdot \int_{\R\times\R_+}|F(x+iy)|^2y^{2p-1}{\rm d}x{\rm d}y,
    \end{multline}
    for every $F\in B_{2p+1}(\mathbb{C}^+)$, where $\Gamma(\beta,\alpha)$ denotes the image of $\Lambda(\beta,\alpha)$ via the isomorphism $\R^2\ni(x,y)\mapsto x+iy\in\mathbb{C}$. Finally, Theorem~\ref{thm:seip1} allows us to conclude that equation~\eqref{eq:frameequivalence4} is satisfied if and only if $\beta\ln(\alpha)<2\pi/p$.
\end{proof}

With this preparatory result, we now work towards establishing our main result. For this, we say that a frame $\Phi=\{\varphi_{i}\}_{i\in I}$ of a separable Hilbert space $\mathcal{H}$ \emph{does phase retrieval} if the nonlinear map 
\begin{equation*}
  \mathcal{A}_{\Phi}\colon\mathcal{H}/\!\sim\,\to \R_+^I,\qquad  \mathcal{A}_{\Phi}([f])=\{|\langle f, \varphi_i\rangle|\}_{i\in I}  
\end{equation*}
is injective, where $f\sim g$ if and only if $f=\text{e}^{i\alpha}g$ for some $\alpha\in\R$. Building on Proposition~\ref{prop:Poissonframe}, our main theorem shows that the multi-wavelet frame 
\begin{equation*}
\{T_bD_a P_p\}_{(b,a)\in\Lambda(\beta,\alpha)}\cup \{T_bD_a \mathscr{H}P_p\}_{(b,a)\in\Lambda(\beta,\alpha)}\cup \{T_bD_a (\mu_1 P_p+\mu_2 \mathscr{H}P_p)\}_{(b,a)\in\Lambda(\beta,\alpha)}
\end{equation*}
does phase retrieval in $L^2(\R,\R)$ if $\Lambda(\beta,\alpha)$ is a hyperbolic lattice with $\beta\ln(\alpha)\leq4\pi/(1+4p)$, and if the collection of vectors 
\[
\{(1,0),(0,1),(\mu_1,\mu_2)\}
\]
satisfies the complement property whose definition is recalled in the next section.
 
\subsection{Sign retrieval in \texorpdfstring{$\mathbbm{R}^{\boldsymbol{M}}$}{RM}.}
Given a collection of vectors $\Phi=\{v_{n}\}_{n=1}^N$ in $\R^M$, we consider the map 
\[
\mathcal{A}_{\Phi}\colon\R^M/\!\sim\,\to \R_+^N,\qquad  \mathcal{A}_{\Phi}(v)=\{|\langle v, v_{n}\rangle|\}_{n=1}^N,
\]
where $v\sim w$ if and only if $v=\pm w$. The term ``sign retrieval in $\bbR^M$'' refers to the study of necessary and sufficient conditions on $\Phi$ under which the map $\mathcal{A}_{\Phi}$ is injective. It is well-known that the map $\mathcal{A}_{\Phi}$ is injective if and only if the collection $\Phi$ has the complement property.
\begin{dfn}
Let $M, N\in\N$ and let $\Phi=\{v_{n}\}_{n=1}^N$ be a collection of vectors in $\R^M$. We say that $\Phi$ has the {\it complement property} if, for every subset $S\subseteq\{1,\ldots,N\}$, either $\text{span}\{v_{n}: n\in S\}=\R^M$ or $\text{span}\{v_{n}: n\in \{1,\ldots,N\}\setminus S\}=\R^M$.
\end{dfn}
\begin{prop}[\cite{balan2006signal}]\label{prop:cp}
Let $M, N\in\N$ and let $\Phi=\{v_{n}\}_{n=1}^N$ be a collection of vectors in $\R^M$. Then, the map 
\begin{equation}\label{eq:mapsignretrieval}
  \mathcal{A}_{\Phi}\colon\R^M/\{\pm 1\}\to \R_+^N,\qquad  \mathcal{A}_{\Phi}(v)=\{|\langle v, v_{n}\rangle|\}_{n=1}^N,  
\end{equation}
is injective if and only if $\Phi$ has the complement property.
\end{prop}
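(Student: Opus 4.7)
The plan is to prove both implications of this classical Balan--Casazza--Edidin characterization directly, since both reduce to straightforward linear-algebra arguments once one decodes what the magnitude equalities mean. Throughout, I will work with representatives $v,w\in\R^M$ and remember that $v\sim w$ means $v=\pm w$.

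For the ``if'' direction (complement property $\Rightarrow$ injectivity of $\mathcal{A}_\Phi$), I would start by assuming $\mathcal{A}_\Phi(v)=\mathcal{A}_\Phi(w)$, which means $|\langle v,v_n\rangle|=|\langle w,v_n\rangle|$ for every $n\in\{1,\dots,N\}$. The key observation is the algebraic identity
\[
\langle v,v_n\rangle^2-\langle w,v_n\rangle^2=\langle v-w,v_n\rangle\,\langle v+w,v_n\rangle,
\]
so for every $n$ at least one of the two factors vanishes. Define
\[
S:=\{n:\langle v-w,v_n\rangle=0\},\qquad S^c\supseteq\{n:\langle v+w,v_n\rangle=0\},
\]
(i.e., the complement with respect to $\{1,\dots,N\}$ is contained in the second vanishing set by the preceding dichotomy). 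The complement property then forces either $\Span\{v_n:n\in S\}=\R^M$, in which case $v-w=0$, or $\Span\{v_n:n\in S^c\}=\R^M$, in which case $v+w=0$. Either way $v\sim w$.

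For the converse, I would argue by contrapositive: assume $\Phi$ fails the complement property, and produce $v,w\in\R^M$ with $v\not\sim w$ but $\mathcal{A}_\Phi(v)=\mathcal{A}_\Phi(w)$. By the failure, there exists $S\subseteq\{1,\dots,N\}$ such that neither $\{v_n:n\in S\}$ nor $\{v_n:n\in S^c\}$ spans $\R^M$; pick nonzero $u_1\in\Span\{v_n:n\in S\}^{\perp}$ and nonzero $u_2\in\Span\{v_n:n\in S^c\}^{\perp}$. Setting $v:=u_1+u_2$ and $w:=u_1-u_2$, a direct check shows that for $n\in S$ one has $\langle v,v_n\rangle=\langle u_2,v_n\rangle=-\langle w,v_n\rangle$, and for $n\in S^c$ one has $\langle v,v_n\rangle=\langle u_1,v_n\rangle=\langle w,v_n\rangle$; so $|\langle v,v_n\rangle|=|\langle w,v_n\rangle|$ for every $n$, i.e.\ $\mathcal{A}_\Phi(v)=\mathcal{A}_\Phi(w)$.

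The only step that requires genuine care is verifying $v\not\sim w$ in the converse. Since $v-w=2u_2$ and $v+w=2u_1$, the nonvanishing of \emph{both} $u_1$ and $u_2$ (guaranteed by the failure of the complement property on both $S$ and $S^c$) rules out $v=w$ and $v=-w$ simultaneously. This is the main (and only subtle) obstacle, and it is exactly why the complement property is phrased as a two-sided spanning condition; weakening it to a one-sided spanning condition would not suffice.
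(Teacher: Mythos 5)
Your proof is correct. Note that the paper itself gives no proof of this proposition --- it is quoted from Balan--Casazza--Edidin \cite{balan2006signal} --- so there is nothing to compare against; your argument is precisely the standard one from that reference: the factorization $\langle v,v_n\rangle^2-\langle w,v_n\rangle^2=\langle v-w,v_n\rangle\langle v+w,v_n\rangle$ for the ``if'' direction, and the counterexample $v=u_1+u_2$, $w=u_1-u_2$ built from the two orthogonal complements for the ``only if'' direction. Both halves are complete, and you correctly isolate the one point needing care, namely that $u_1\neq 0$ and $u_2\neq 0$ together rule out both $v=w$ and $v=-w$. One cosmetic slip: the displayed inclusion $S^c\supseteq\{n:\langle v+w,v_n\rangle=0\}$ points the wrong way --- the dichotomy gives $S^c\subseteq\{n:\langle v+w,v_n\rangle=0\}$, which is also what your parenthetical remark says and what the argument actually uses (you need $v+w$ to be orthogonal to \emph{every} $v_n$ with $n\in S^c$ in order to conclude $v+w=0$ when that family spans $\R^M$). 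Fix the symbol and the proof stands as written.
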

Proposition~\ref{prop:cp} immediately implies that the map in equation~\eqref{eq:mapsignretrieval} is not injective if $N\leq 2M-2$. A natural question is therefore whether $N=2M-1$ vectors are sufficient to yield injectivity of $\mathcal{A}_\Phi$. This question has been answered in \cite{balan2006signal} where the authors show that a collection of vectors $\Phi=\{v_{n}\}_{n=1}^{2M-1}\subseteq \R^M$ has the complement property if and only if $\Phi$ is {\it full spark}, which means that every subcollection of $M$ vectors of $\Phi$ spans $\R^M$.

The complement property can be stated in the more general setting where $\Phi$ is a collection of vectors in a separable Hilbert space $\mathcal{H}$. 

\begin{dfn}[\cite{cahill2016phase}]\label{def:cphilbert}
Let $\Phi=\{\varphi_{i}\}_{i\in I}$ be a collection of vectors in a separable Hilbert space $\mathcal{H}$. We say that $\Phi$ has the \emph{complement property} if, for every subset $S\subseteq I$, either $\overline{\text{span}\{\varphi_{i}: i\in S\}}=\mathcal{H}$ or $\overline{\text{span}\{\varphi_{i}: i\in I\setminus S\}}=\mathcal{H}$.
\end{dfn}

Analogously to the finite dimensional setting, if $\mathcal{H}$ is a Hilbert space over $\R$, a collection $\Phi$ does phase retrieval if and only if $\Phi$ has the complement property. If $\mathcal{H}$ is a Hilbert space over $\C$, we only know  that the complement property is a necessary condition for uniqueness of the phase retrieval problem, see \cite{alaifari2017phase,cahill2016phase}.

\section{Main Results: Wavelet Sign Retrieval without Bandlimiting}
This section is devoted to presenting our contributions to wavelet sign retrieval. The novelty of our results with respect to the existing literature lies in the fact that we do not need any a priori knowledge about the real-valued square-integrable signals to guarantee their unique recovery from the absolute values of their wavelet transforms.


Recall that we distinguish between continuous wavelet sign retrieval, meaning the recovery of $f$ (up to a global sign) from $(|\mathcal{W}_\phi f(b,a)|)_{(b,a)\in\Omega}$, where $\Omega$ has the cardinality of the continuum, and sampled wavelet sign retrieval, i.e.~the recovery of $f$ (up
to a global sign) from $(|\mathcal{W}_\phi f(b,a)|)_{(b,a)\in\Lambda}$, where $\Lambda$ is a discrete subset of $\R \times \R_+$.

\subsection{Continuous wavelet sign retrieval.}
We recall that $\mathscr{A}(\C^+)$ denotes the space of real-analytic functions on the upper-half plane. 
\begin{thm}\label{thm:signretrievalanalytic} 
Let $\Omega\subseteq \R\times\R_+$ be a set with positive measure and let $\phi\in L^2(\R,\R)$ be an analyzing wavelet such that $\operatorname{Ran}\mathcal{W}_{\phi}\subseteq\mathscr{A}(\C^+)$. Then, the following are equivalent for $f,g\in L^2(\R,\R)$:
\begin{itemize}
    \item[(i)] $f=\pm g$,
    \item[(ii)] $|\mathcal{W}_{\phi}f|=|\mathcal{W}_{\phi}g|$ on $\Omega$.
\end{itemize}
\end{thm}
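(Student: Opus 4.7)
The direction (i) $\Rightarrow$ (ii) is immediate from linearity of $\mathcal{W}_\phi$ and the fact that $|{-}\mathcal{W}_\phi g| = |\mathcal{W}_\phi g|$. For the converse, the plan is to exploit real-analyticity of the wavelet transforms to upgrade the algebraic relation $|u|=|v|$, which is \emph{a priori} only known on the thin set $\Omega$, into one of the linear relations $u=v$ or $u=-v$ on all of $\R\times\R_+$.

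Set $u := \mathcal{W}_\phi f$ and $v := \mathcal{W}_\phi g$. Since $f,g,\phi$ are all real-valued, $u$ and $v$ are real-valued; by hypothesis they both lie in $\mathscr{A}(\C^+)$, so they are real-analytic on the connected open set $\R\times\R_+$. The identity $|u|=|v|$ on $\Omega$ is equivalent to
\[
  (u-v)(u+v) \;=\; u^2 - v^2 \;=\; 0 \quad \text{on } \Omega.
\]
The function $u^2-v^2$ is real-analytic on $\R\times\R_+$, and the zero set of a non-zero real-analytic function on a connected open subset of $\R^2$ has Lebesgue measure zero. Since $\Omega$ has positive measure, I conclude $u^2-v^2 \equiv 0$ on all of $\R\times\R_+$.

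Now apply the same principle once more to the factorization. The product $(u-v)(u+v)$ vanishes identically on $\R\times\R_+$, while each factor is itself real-analytic. If both factors were non-zero, then each of their zero sets would have Lebesgue measure zero, and hence so would their union; but this union equals $\R\times\R_+$, a contradiction. Therefore either $u-v\equiv 0$ or $u+v\equiv 0$ on $\R\times\R_+$. Injectivity of $\mathcal{W}_\phi$ (which holds because $\phi$ is an admissible wavelet) then translates this to $f=g$ or $f=-g$, as required.

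The only genuinely non-trivial input is the identity principle for real-analytic functions on $\R^2$, namely that the zero set of a non-zero real-analytic function on a connected open domain has Lebesgue measure zero. This is standard (it follows, for instance, from the Weierstrass preparation theorem), and it is precisely what allows the passage from a positive-measure set $\Omega$ to the full half-plane. The algebraic trick $|u|=|v| \Longleftrightarrow u^2=v^2 \Longleftrightarrow (u-v)(u+v)=0$, which linearizes the sign-retrieval condition, is the observation that makes real-analyticity applicable in this setting.
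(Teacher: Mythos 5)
Your proof is correct and follows essentially the same route as the paper's: both reduce (ii) to the vanishing of the real-analytic functions $\mathcal{W}_\phi(f\pm g)$ and invoke the fact that the zero set of a non-zero real-analytic function on the connected half-plane has measure zero. The only cosmetic difference is that you apply this identity principle to the product $(u-v)(u+v)=u^2-v^2$ and then split the factors, whereas the paper partitions $\Omega$ into the sets where $\mathcal{W}_\phi f=\mathcal{W}_\phi g$ and where $\mathcal{W}_\phi f=-\mathcal{W}_\phi g$ and observes that one of them must have positive measure.
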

\begin{proof}
It is clear that item (i) implies item (ii). Let us therefore assume that $f,g\in L^2(\R,\R)$ satisfy
\[
|\mathcal{W}_\phi f(b,a)|=|\mathcal{W}_\phi g(b,a)|, \qquad (b,a)\in\Omega.
\]
If we consider the subset
\begin{equation*}
    S=\{(b,a)\in\Omega:\mathcal{W}_\phi f(b,a)=\mathcal{W}_\phi g(b,a)\},
\end{equation*}
then the functions $h_1=f+g$ and $h_2=f-g$ satisfy
\[
    \mathcal{W}_{\phi}h_1(b,a)=0, \qquad (b,a)\in \Omega\setminus S,
\]
and 
\[
\mathcal{W}_{\phi}h_2(b,a)=0, \qquad (b,a)\in S.
\]
Since $\Omega$ has positive measure, either $S$ or $\Omega\setminus S$ has positive measure. Therefore, either $\mathcal{W}_{\phi}h_1\equiv0$ or $\mathcal{W}_{\phi}h_2\equiv0$ because the zero set of a non-zero real analytic function has zero measure. We can thus conclude that $f=\pm g$.
\end{proof}
\begin{rmk}
By Proposition~\ref{prop:cauchyandpoisson}, Theorem~\ref{thm:signretrievalanalytic} applies to the family of the Poisson wavelets and to their Hilbert transforms. 
\end{rmk}

\subsection{Sampled wavelet sign retrieval}
Theorem~\ref{thm:signretrievalanalytic} does not apply to the case where we only know the magnitude of the wavelet transform on a discrete set. We recall that a hyperbolic lattice in the upper half-plane takes the form 
\[
\Lambda(\beta,\alpha)=\{(\alpha^m\beta n,\alpha^m)\}_{m,n\in\mathbb{Z}}
\]
for some values of the parameters $\alpha>1$ and $\beta>0$. It is therefore a natural question to ask under which assumptions on the analyzing wavelets and on the sample parameters $\alpha$ and $\beta$ we can obtain a uniqueness result as in Theorem~\ref{thm:signretrievalanalytic}. We address this problem with a multi-wavelet approach. 

\begin{thm}\label{thm:multiwavelets}
Let $p>0$ and let $\alpha>1$ and $\beta>0$ satisfy
\[
\beta\ln(\alpha)\leq\frac{4\pi}{1+4p}. 
\]
Furthermore, let
\[
\phi_i=\lambda_{i,1}P_p+\lambda_{i,2}\mathscr{H}P_p,\quad i=1,2,3,
\]
for a collection of full spark vectors 
\[
\{\lambda_i=(\lambda_{i,1},\lambda_{i,2})\}_{i=1}^{3}
\]
in $\mathbb{R}^2.$ 
Then, every $f\in L^2(\R,\R)$ is uniquely determined, up to a global sign, by the magnitudes of the multi-wavelet frame coefficients
\begin{equation*} 
\{\lvert \mathcal{W}_{\phi_i} f(b,a) \rvert: i\in\{1,2,3\}, (b,a)\in\Lambda(\beta,\alpha)\}.
\end{equation*} 
Equivalently, the multi-wavelet frame 
\begin{equation*}
\{T_bD_a\phi_1\}_{(b,a)\in\Lambda(\beta,\alpha)}\cup \{T_bD_a\phi_2\}_{(b,a)\in\Lambda(\beta,\alpha)}\cup \{T_bD_a\phi_3\}_{(b,a)\in\Lambda(\beta,\alpha)}.
\end{equation*}
does sign retrieval in $L^2(\R,\R)$.
\end{thm}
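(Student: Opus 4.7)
The strategy is to combine the pointwise sign retrieval result in $\R^2$ (Proposition~\ref{prop:cp}) with the uniqueness set property of hyperbolic lattices in weighted Bergman spaces (Theorems~\ref{thm:seip1} and~\ref{thm:seip}). Suppose $f,g\in L^2(\R,\R)$ yield identical magnitude measurements under $\phi_1,\phi_2,\phi_3$ on $\Lambda(\beta,\alpha)$. By Proposition~\ref{prop:cauchyandpoisson},
\[
\mathcal{W}_{\phi_i}f(b,a)=2\bigl(\lambda_{i,1}\operatorname{Re}\mathcal{W}_{\psi_p}f(b,a)-\lambda_{i,2}\operatorname{Im}\mathcal{W}_{\psi_p}f(b,a)\bigr),
\]
so $\mathcal{W}_{\phi_i}f(b,a)$ is a real linear functional of the vector $v_f(b,a):=(\operatorname{Re}\mathcal{W}_{\psi_p}f(b,a),-\operatorname{Im}\mathcal{W}_{\psi_p}f(b,a))\in\R^2$, and likewise for $g$. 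Since $\{\lambda_i\}_{i=1}^{3}$ is full spark in $\R^2$, Proposition~\ref{prop:cp} provides, at each $(b,a)\in\Lambda(\beta,\alpha)$, a sign $\varepsilon(b,a)\in\{\pm1\}$ with $v_f(b,a)=\varepsilon(b,a)v_g(b,a)$; equivalently $\mathcal{W}_{\psi_p}f(b,a)=\varepsilon(b,a)\mathcal{W}_{\psi_p}g(b,a)$.

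The crux is then to promote the a priori point-dependent sign $\varepsilon(\cdot)$ to a single global sign. Setting $F(b+ia):=a^{-(1/2+p)}\mathcal{W}_{\psi_p}f(b,a)$ and analogously $G$, we obtain two holomorphic elements of $B_{2p+1}(\C^+)$ via the isometry~\eqref{eq:mapbergmanwavelet}. At each lattice point $z_j\in\Gamma(\beta,\alpha)$ (the image of $\Lambda(\beta,\alpha)$ under $(b,a)\mapsto b+ia$), at least one of $F-G$, $F+G$ vanishes, hence
\[
H:=(F-G)(F+G)=F^{2}-G^{2}
\]
is a holomorphic function on $\C^+$ vanishing on all of $\Gamma(\beta,\alpha)$. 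I would then show $H\in B_{4p+2}(\C^+)$ as follows: subharmonicity of $|F|^2$ together with the area mean-value property over the disk $\{|z-(b+ia)|<a/2\}$ (on which the weight $y^{(2p+1)-2}$ is comparable to $a^{2p-1}$) yields the pointwise bound $|F(b+ia)|^2\lesssim a^{-(2p+1)}\|F\|_{B_{2p+1}}^2$. Inserting this into $\int|F|^{4}y^{4p}\,\mathrm{d}x\,\mathrm{d}y$ gives $\|F^{2}\|_{B_{4p+2}}\lesssim\|F\|_{B_{2p+1}}^{2}$, and similarly for $G$.

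With $w:=4p+2$ one has $w-1=4p+1$, so the hypothesis $\beta\ln(\alpha)\leq 4\pi/(1+4p)$ places us either strictly below the Seip density threshold (Theorem~\ref{thm:seip1}: sampling, hence uniqueness) or exactly at it (Theorem~\ref{thm:seip}: uniqueness). Either way $\Gamma(\beta,\alpha)$ is a uniqueness set for $B_{4p+2}(\C^+)$ and therefore $H\equiv0$. Since $\C^+$ is connected and $F\pm G$ are holomorphic, the zero-product principle forces one of them to vanish identically, i.e.~$\mathcal{W}_{\psi_p}f\equiv\varepsilon\mathcal{W}_{\psi_p}g$ on all of $\R\times\R_+$ for a single $\varepsilon\in\{\pm1\}$. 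Invoking the identity $\mathcal{W}_{\psi_p}h=\tfrac12\mathcal{W}_{\psi_p}h_+$ for real-valued $h$ (established in the proof of Proposition~\ref{prop:Poissonframe}) together with the isometry~\eqref{eq:waveletisometry}, this lifts to $f_+=\varepsilon g_+$ in $\mathcal{H}_+(\R)$, and since a real-valued signal is uniquely determined by its analytic representation we conclude $f=\varepsilon g$.

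I expect the only non-routine step to be the Bergman-embedding $F,G\in B_{2p+1}\Rightarrow F^{2}-G^{2}\in B_{4p+2}$. The doubling $w\mapsto 2w$ matches the density threshold $4\pi/(1+4p)$ in the statement \emph{exactly}, which is a strong indication that this direct route is the intended one: any coarser embedding into a larger-$w$ Bergman space would push the critical density beyond the hypothesis. Everything else reduces to a clean application of the already-cited results of Seip and of Balan--Casazza--Edidin.
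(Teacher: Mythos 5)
Your proposal is correct and follows essentially the same route as the paper: pointwise sign retrieval in $\R^2$ via the complement property of the full spark family $\{\lambda_i\}$, then the product $H=(F-G)(F+G)$ (the paper's $H_1H_2$ with $h_1=f+g$, $h_2=f-g$) vanishing on the lattice, lying in $B_{2+4p}(\C^+)$, and killed by Seip's uniqueness theorem with $w-1=1+4p$. The only (minor) divergence is in proving $H\in B_{2+4p}(\C^+)$: where you invoke a subharmonic mean-value pointwise estimate, the paper gets by with the elementary Cauchy--Schwarz bound $|\mathcal{W}_{\psi_p}h_1(b,a)|\le\|\psi_p\|_2\|h_1\|_2$ on one factor and the isometry \eqref{eq:waveletisometry} on the other, which yields the same conclusion more directly.
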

\begin{proof}
Let $f,g\in L^2(\R,\R)$ be such that 
\[
|\mathcal{W}_{\phi_i} f(b,a)|=|\mathcal{W}_{\phi_i} g(b,a)|,\quad (b,a)\in\Lambda(\beta,\alpha),
\]
where 
\[
\phi_i=\lambda_{i,1}P_p+\lambda_{i,2}\mathscr{H}P_p,\quad i=1,2,3,
\]
for a collection of full spark vectors 
\[
\{\lambda_i=(\lambda_{i,1},\lambda_{i,2})\}_{i=1}^{3}.
\]
We prove that $f=\pm g$. 
Let us first observe that, for every  $(b,a)\in\Lambda(\beta,\alpha)$ and for every  $i=1,2,3$, it holds that 
\begin{align*}
    |\langle(\mathcal{W}_{P_p} f(b,a),\mathcal{W}_{\mathscr{H}P_p} f(b,a)),\lambda_i\rangle|&=|\mathcal{W}_{\phi_i} f(b,a)| = |\mathcal{W}_{\phi_i} g(b,a)|\\
    &=|\langle(\mathcal{W}_{P_p} g(b,a),\mathcal{W}_{\mathscr{H}P_p} g(b,a)),\lambda_i\rangle|.
\end{align*}
By hypothesis, the set of vectors $\{\lambda_i\}_{i=1}^{3}$ is full spark and therefore satisfies the complement property. Consequently, for every $(b,a)\in\Lambda(\beta,\alpha)$,
\begin{align}\label{eq:reference}
(\mathcal{W}_{P_p} f(b,a),\mathcal{W}_{\mathscr{H}P_p} f(b,a))&=\pm(\mathcal{W}_{P_p} g(b,a),\mathcal{W}_{\mathscr{H}P_p} g(b,a)).
\end{align}
If we consider the subset
\begin{align*}
    S=\{(b,a)\in\Lambda(\beta,\alpha):(\mathcal{W}_{P_p} f(b,a),\mathcal{W}_{\mathscr{H}P_p} f(b,a))&=(\mathcal{W}_{P_p} g(b,a),\mathcal{W}_{\mathscr{H}P_p} g(b,a))\},
\end{align*}
then the functions $h_1=f+g$ and $h_2=f-g$ satisfy
\begin{align*}
    (\mathcal{W}_{P_p} h_1(b,a),\mathcal{W}_{\mathscr{H}P_p} h_1(b,a))=0, \qquad (b,a)\in \Lambda(\beta,\alpha)\setminus S
\end{align*}
and
\begin{align*}
    (\mathcal{W}_{P_p} h_2(b,a),\mathcal{W}_{\mathscr{H}P_p} h_2(b,a))=0, \qquad (b,a)\in S.
\end{align*}
Therefore, by Proposition~\ref{prop:cauchyandpoisson}, the functions $h_1$ and $h_2$ satisfy
\begin{align*}
    (2\,\text{Re}[\mathcal{W}_{\psi_p}h_1(b,a)],-2\,\text{Im}[\mathcal{W}_{\psi_p}h_1(b,a)])=0, \qquad (b,a)\in \Lambda(\beta,\alpha)\setminus S
\end{align*}
and
\begin{align*}
    (2 \,\text{Re}[\mathcal{W}_{\psi_p}h_2(b,a)],-2 \,\text{Im}[\mathcal{W}_{\psi_p}h_2(b,a)])=0, \qquad (b,a)\in S,
\end{align*}
or equivalently
\begin{align*}
    \mathcal{W}_{\psi_p}h_1(b,a)=0 \qquad (b,a)\in \Lambda(\beta,\alpha)\setminus S \qquad \text{and} \qquad \mathcal{W}_{\psi_p}h_2(b,a)=0 \qquad (b,a)\in S.
\end{align*}
We introduce the functions  
\begin{align*}
H_1(b+ia):=a^{-(\frac{1}{2}+p)}\mathcal{W}_{\psi_p}h_1(b,a),\quad H_2(b+ia):=a^{-(\frac{1}{2}+p)}\mathcal{W}_{\psi_p}h_2(b,a),
\end{align*}
of one complex variable, where $(b,a)\in\R\times\R_+$, and note that $H_1$ and $H_2$ are entire functions on the upper half plane (cf.~equation~\eqref{eq:mapbergmanwavelet}). As a consequence, the function
\[
    H=H_1\cdot H_2
\]
is also an entire function on the upper half plane and 
\begin{equation}\label{eq:Hlattice}
    H(b+ia)=0, \qquad (b,a)\in\Lambda(\beta,\alpha).
\end{equation}
Furthermore, by equation~\eqref{eq:waveletisometry},
\begin{align*}
    \int_{\R\times\R_+}|H(b+ia)|^2a^{2+4p}\frac{{\rm d}b{\rm d}a}{a^2}&=\int_{\R\times\R_+}|\mathcal{W}_{\psi_p}h_1(b,a)|^2|\mathcal{W}_{\psi_p}h_2(b,a)|^2\frac{{\rm d}b{\rm d}a}{a^2}\\
    &\leq \|\psi_p\|_2^2\|h_1\|_2^2\int_{\R\times\R_+}|\mathcal{W}_{\psi_p}h_2(b,a)|^2\frac{{\rm d}b{\rm d}a}{a^2}\\
    &=C_{\psi_p}\|\psi_p\|_2^2\|h_1\|_2^2\|h_2\|_2^2,
\end{align*}
where
\[
C_{\psi_p}=\int_{0}^{+\infty}\frac{|\widehat{\psi_p}(\xi)|^2}{\omega}{\rm d}\omega<\infty,
\]
showing that $H$ belongs to the weighted Bergman space $B_{2+4p}(\mathbb{C}_+)$. By Theorems~\ref{thm:seip1} and \ref{thm:seip}, the image of $\Lambda(\beta,\alpha)$ via the isomorphism $\R^2\ni(x,y)\mapsto x+iy\in\mathbb{C}$ is a uniqueness set for $B_{2+4p}(\mathbb{C}_+)$ and hence equation~\eqref{eq:Hlattice} implies $H\equiv 0$. Therefore, either $H_1 \equiv 0$ or $H_2 \equiv 0$ which shows that $f = \pm g$.
\end{proof}

\begin{ex}
Theorem~\ref{thm:multiwavelets} applies to the analyzing wavelets
\[
\phi_1=P,\quad \phi_2=\mathscr{H}P,\quad \phi_3=P+\mathscr{H}P,
\]
together with the dyadic lattice $\Lambda(4\pi/5\ln(2),2)$, see Figures~\ref{fig:fandg1} and \ref{fig:fandg2}.
\end{ex}
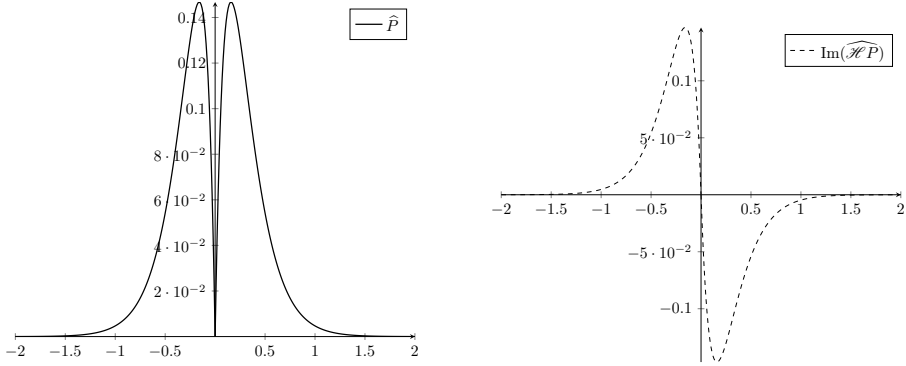
\begin{figure}
    \centering
        \begin{tikzpicture}[scale=0.6]
        \begin{axis}[height=9cm, axis lines = middle]
            \addplot[domain=0:2, samples=200, thick] {sqrt(2*pi)*x*exp(-2*pi*x)};
            \addplot[domain=-2:0, samples=200, thick] {-sqrt(2*pi)*x*exp(2*pi*x)};
        \addlegendentry{$\widehat{P}$}
        \end{axis}
    \end{tikzpicture}
      \qquad
    \begin{tikzpicture}[scale=0.6]
        \begin{axis}[height=9cm, axis lines = middle]
            \addplot[domain=0:2, samples=200, dashed] {-sqrt(2*pi)*x*exp(-2*pi*x)};
            \addplot[domain=-2:0, samples=200, dashed] {-sqrt(2*pi)*x*exp(2*pi*x)};
        \addlegendentry{Im($\widehat{\mathscr{H}P}$)}
       \end{axis}
 \end{tikzpicture}
     \caption{The Fourier transform of the Poisson wavelet (on the left) and its Hilbert transform (on the right). Since the Fourier transform of the Poisson wavelet is real-valued, the Fourier transform of its Hilbert transform is purely imaginary.}
     \label{fig:fandg2}
\end{figure}
\begin{rmk}\label{rmk:nonuniformsampling}
More generally, we could replace the regular set of sampling $\Lambda(\beta,\alpha)$ in Theorem~\ref{thm:multiwavelets} with any other sampling set for $B_{2+4p}(\mathbb{C}_+)$. We refer to \cite{seip199nonregular} for a complete characterization of sets of sampling for Bergman type spaces on the unit disk $\mathbb{D}$. We recall that the upper-half plane $\mathbb{C}_+$ can be mapped onto the unit disk $\mathbb{D}$ under the conformal map
\[
\varphi(z)=\frac{z-i}{z+i},
\]
known as the Cayley transform. 
\end{rmk}
\begin{rmk}\label{rmk:cpcauchy}
In the second part of the proof of Theorem~\ref{thm:multiwavelets}, we actually show that, if $\beta\ln(\alpha)\leq4\pi/(1+4p)$, the Cauchy wavelet frame
\[
\{T_bD_a\psi_p\}_{(b,a)\in\Lambda(\beta,\alpha)}
\]
for the Hardy space $\mathcal{H}_+(\R)$ satisfies the complement property (see Definition~\ref{def:cphilbert}). Indeed, suppose that there exists a subset $S\subseteq \Lambda(\beta,\alpha)$ such that 
\[
\overline{\text{span}\{T_bD_a\psi_p\}_{(b,a)\in \Lambda(\beta,\alpha)\setminus S}}\ne\mathcal{H}_+(\R)\quad\text{and}\quad\overline{\text{span}\{T_bD_a\psi_p\}_{(b,a)\in S}}\ne\mathcal{H}_+(\R).
\]
This means that there exist two non-zero functions $h_1,h_2\in\mathcal{H}_+(\R)$ such that 
\begin{align*}
    \mathcal{W}_{\psi_p}h_1(b,a)=0 \qquad (b,a)\in \Lambda(\beta,\alpha)\setminus S \qquad \text{and}\qquad \mathcal{W}_{\psi_p}h_2(b,a)=0 \qquad (b,a)\in S.
\end{align*} 
Then, the same argument as in the proof of Theorem~\ref{thm:multiwavelets} leads to the conclusion that either $h_1$ or $h_2$ has to be zero and thus the complement property holds. However, in the complex-valued case, the complement property is only a necessary condition for uniqueness of the phase retrieval problem and we cannot conclude that the operator 
\begin{equation}\label{eq:properatorcauchy}
\mathcal{H}_+(\R)/\!\sim\, \ni h\mapsto (|\mathcal{W}_{\psi_p} h(b,a)|)_{(b,a)\in\Lambda(\beta,\alpha)} \in \bbR^{\Lambda(\beta,\alpha)}
\end{equation}
is injective. Determining a hyperbolic lattice such that the operator in equation~\eqref{eq:properatorcauchy} is injective remains an interesting open problem.
\end{rmk}
\section*{Acknowledgment}
The authors would like to thank Marco Peloso for pointing them to the reference \cite{seip199nonregular} discussed in Remark~\ref{rmk:nonuniformsampling}.

\bibliographystyle{amsplain}

\end{document}